\newcommand{\Z}{\mathbb{Z}}
\theoremstyle{plain}
\newtheorem{theorem}{Theorem}[section]
\newtheorem{proposition}[theorem]{Proposition}
\newtheorem{corollary}[theorem]{Corollary}
\newtheorem{lemma}[theorem]{Lemma}
\theoremstyle{remark}
\theoremstyle{definition}
\title{Some results on extension of maps and applications}
\author[C.\ Biasi]{Carlos Biasi}
\address{Departamento de Matem\'atica\\
	Instituto de Ci\^encias Matem\'aticas e de Computa\c c\~ao\\
	S\~ao Paulo University - C\^ampus de S\~ao Carlos \\
	13560-970, S\~ao Carlos, SP, Brazil}
\email{biasi@icmc.usp.br}
\author[A.\ K.\ M.\ Libardi]{Alice K.\ M.\ Libardi}
\address{Departamento de Matem\'{a}tica\\
	S\~ao Paulo State University (Unesp), Institute of Geosciences and Exact Sciences, Rio Claro\\
	Bela Vista\\
	13506-700, Rio Claro, SP, Brazil}
\email{alicekml@rc.unesp.br}
\author[T.\ de Melo]{Thiago de Melo} 
\address{Departamento de Matem\'{a}tica\\
	S\~ao Paulo State University (Unesp), Institute of Geosciences and Exact Sciences, Rio Claro\\
	Bela Vista\\
	13506-700, Rio Claro, SP, Brazil}
\email{tmelo@rc.unesp.br}
\author[E.\ L.\ dos Santos]{Edivaldo L.\ dos Santos}
\address{Departamento de Matem\'{a}tica \\
	Federal University of S\~ao Carlos \\
	Rodovia Washington Luiz. km 235 \\
	S\~ao Carlos, SP, Brazil}
\email{edivaldo@dm.ufscar.br}
\subjclass[2010]{Primary 57R42; Secondary 55Q10, 55P60.}
\keywords{extension of maps, obstruction, homotopy, vector bundle}
\thanks{This work is partially supported by the Projeto Tem\'atico: Topologia Alg\'ebrica, Geom\'etrica e Diferencial, FAPESP Process Number 2016/24707-4}
\begin{document}
\baselineskip=1.5em

\maketitle
\hfill \centerline{\it \hspace{1,5cm} Dedicated to Professor Gilberto Loibel, in memorian.}

\begin{abstract}
 This paper concerns  extension of maps using obstruction theory under a non classical viewpoint. It is given a classification of homotopy classes
 of maps and as an application it is presented a simple proof of a theorem by Adachi about equivalence of vector bundles. Also  
 it is proved that, under certain conditions,  two embeddings are homotopic up to surgery if and only if the respective normal bundles are $SO$-equivalent.
\end{abstract}

\section{Introduction}


Two embeddings $f,g\colon M\to N$ between manifolds are homotopic up to surgery on $N$ if it is possible to make a finite number of surgeries on $N$ outside of the images of $f$ and $g$ obtaining a new manifold $N'$
and maps $f',g'\colon M\to N'$ such that $f'$ and $g'$ are homotopic. 

Consider the case where $M$ and $N$ are closed orientable $C^\infty$ manifolds of dimensions $m$ and $n$, respectively, and $m\leq \frac{n-2}{2}$. Then we ask on 
which conditions $f$ and $g$ are homotopic up to surgery
on $N$? In \cite{loibel}, the authors showed that if  $M=\mathbb{S}^m$ or $N$ is a $\pi$-manifold then $f$ and $g$ are homotopic up to surgery on $N$.


In this paper we prove the following theorem.

\begin{theorem}\label{teo.3.6}If $M$ and $N$ are two closed orientable manifolds of dimensions $m$ and $n$, respectively, with $m\leq \frac{n-2}{2}$, such that: \begin{enumerate}
\item[a)] $H^{4k}(M;\Z)$ are free groups, for all $k\geq 1$,
\item[b)] $H^{8k+1}(M;\Z_2)=H^{8k+2}(M;\Z_2)=0$, for all $k\geq 1$,
\end{enumerate}
then two embeddings $f,g\colon M\to N$ are homotopic up to surgery on $N$ if and only if the normal bundles $\nu_f$ and $\nu_g$ are equivalent as orientable vector bundles.
\end{theorem}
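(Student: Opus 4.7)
For the necessity, suppose $f$ and $g$ are homotopic up to surgery: there are surgeries on $N$ disjoint from $f(M)\cup g(M)$ producing $N'$, on which the resulting $f',g'\colon M\to N'$ are homotopic. Since the surgeries avoid the images, tubular neighborhoods—and hence normal bundles—are preserved: $\nu_{f'}\cong\nu_f$ and $\nu_{g'}\cong\nu_g$. The hypothesis $m\leq(n-2)/2$ lies strictly inside Haefliger's metastable range ($2m+1<n$), where homotopic embeddings are isotopic, and isotopic embeddings have equivalent oriented normal bundles. Hence $\nu_f\cong\nu_g$ as oriented vector bundles.

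For the sufficiency, assume $\nu_f\cong\nu_g$ as oriented bundles. The idea is to perform surgeries on $N$, outside $f(M)\cup g(M)$, that kill all Postnikov obstructions to a homotopy $f\simeq g$ on the resulting manifold, and then invoke the classification of homotopy classes of maps established earlier in the paper. The bundle equivalence, combined with the Adachi-type theorem discussed in the abstract, translates into a homotopy between the classifying maps $M\to BSO(n-m)$ of the normal bundles. Conditions (a) and (b) are designed so that $H^*(M;-)$ vanishes in precisely those degrees where $\pi_*(BSO)$ is nontrivial—namely $\Z$ in degrees $4k$ and $\Z_2$ in degrees $8k+1,8k+2$ (by Bott periodicity)—which forces all higher obstructions to vanish identically.

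The main obstacle is the surgery step: one must exhibit, at each stage of the Postnikov analysis, embedded spheres in $N$ disjoint from $f(M)\cup g(M)$ representing the relevant obstruction classes, so that the surgeries are admissible in the sense of the definition. The dimensional inequality $m\leq(n-2)/2$ supplies the needed room: since $2m<n-1$, any sphere in the relevant low dimension can be put in general position with respect to $f(M)\cup g(M)$ and then made disjoint from it by a small isotopy. Once those surgeries are carried out, the classification theorem applied to $f,g\colon M\to N'$ yields the desired homotopy, completing the proof. The first direction essentially recovers Haefliger; it is in the sufficiency that both the cohomological hypotheses and the earlier classification theorem play their essential role.
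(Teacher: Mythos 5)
Your necessity direction is fine and is actually a different (and self-contained) route from the paper's: you invoke Haefliger--Zeeman unknotting in codimension $\geq m+2$ to upgrade the homotopy $f'\simeq g'$ to an isotopy and read off $\nu_{f'}\cong\nu_{g'}$, whereas the paper gets this direction for free from the same chain of equivalences it uses for sufficiency. That part is acceptable.

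The sufficiency direction, however, has a genuine gap: you never identify the actual mechanism that connects ``equivalent normal bundles'' to ``homotopic maps into $N'$'', and the mechanism you propose instead does not work. The obstructions to a homotopy between $f$ and $g$ live in $H^{i}(M;\pi_i(N))$ (cohomology of the \emph{source} with coefficients in $\pi_*(N)$); they are not classes in $N$ that can be ``represented by embedded spheres in $N$ disjoint from $f(M)\cup g(M)$'' and surgered away, so your ``main obstacle'' paragraph does not describe a viable surgery step. What the paper actually does is apply Milnor's result (Proposition~3.3) to surger $N$ below the middle dimension, away from the images, so that the classifying map of the \emph{tangent} bundle of $N'$ induces injections $\pi_i(N')\hookrightarrow\pi_i(BSO(n))$ for $i\leq\frac{n-2}{2}$; hypotheses (a) and (b) then guarantee (via Lemma~3.1-type computations) that the coefficient homomorphisms $\mathfrak{X}^i_i(BSO(n))\circ\varphi^i$ are monomorphisms, so Theorem~\ref{teo.3.5} gives injectivity of $\varphi_\#\colon[M,N']\to[M,BSO(n)]$. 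Since $f^*(\tau N)=\tau M\oplus\nu_f$ and $n-m>m$, the oriented bundle equivalence $\nu_f\cong\nu_g$ is equivalent to $\varphi_N\circ f\simeq\varphi_N\circ g$, and injectivity of $\varphi_\#$ converts this into $f'\simeq g'$. Your proposal is missing both Proposition~3.3 and the passage through the tangent bundle of $N$ (you work with $BSO(n-m)$ and the normal bundle directly, which does not interface with any statement about $\pi_*(N')$).

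Two further inaccuracies: condition (a) requires $H^{4k}(M;\Z)$ to be \emph{free}, not zero; its role is to make multiplication by $(2k-1)!a_k$ (the image of the generator of $\pi_{4k}(BSO)$ under the Pontrjagin-class map) injective on $H^{4k}(M;\Z)$, not to kill the group. And Adachi's theorem is an \emph{application} of the machinery, not an ingredient in this proof; citing it here does not supply the missing link between bundle data and $[M,N']$.
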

 
Also, as an application of this theory we give a 
simple proof of a nice  result of Adachi \cite{adachi}, about equivalence of vector bundles.

This paper is organized as follows:  in Section~\ref{sec.2}, results on  homotopy of maps are considered and in Section~\ref{sec.3},
applications of the theory are given.

%
%
%

\section{Homotopy of maps}\label{sec.2}

Let $X$ be a CW-complex and $Y$ an $n$-simple CW complex, for some $n\geq 1$. For an abelian group $G$, let $\mathcal{K}=K(G,n)$ be an Eilenberg--MacLane space and $i_n\in H^n(\mathcal{K};G)$ an $n$-characteristic element.

Given $u\in H^n(Y;G)$, by \cite[8.1.10]{spanier} there exists a map $\varphi \colon Y\to \mathcal{K}$ such that $\varphi^*(i_n)=u$. The induced homomorphism
$\varphi_*\colon \pi_n(Y)\to \pi_n(\mathcal{K})$ gives a  coefficient  homomorphism \begin{equation}\varphi_u^p\colon H^p(X; \pi_n(Y))\to H^p(X;\pi_n(\mathcal{K})),  \text{ for all $p$.} \label{eq1} \end{equation}

\begin{theorem}Let $f,g\colon X^{(n+1)}\to Y$ be homotopic maps over $X^{(n-1)}$ satisfying $f^*(u)=g^*(u)$. If $\varphi_u^n$  is a monomorphism, then $f$ and $g$ are homotopic maps over $X^{(n)}$.\label{teo.2.1}
\end{theorem}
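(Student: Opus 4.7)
The plan is to reinterpret the statement via the primary difference cochain and to exploit the monomorphism hypothesis on $\varphi_u^n$ through naturality. The key point will be that the difference class in $H^n(X;\pi_n(Y))$ is sent to zero by $\varphi_u^n$, which by injectivity forces it to vanish.

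First I would reduce to the case $f|_{X^{(n-1)}}=g|_{X^{(n-1)}}$. The given homotopy over $X^{(n-1)}$ extends, by the homotopy extension property for the cofibration $X^{(n-1)}\hookrightarrow X^{(n+1)}$, to a homotopy $\widetilde H\colon X^{(n+1)}\times I\to Y$ with $\widetilde H_0=g$. Replacing $g$ by $\widetilde H_1$ arranges $f|_{X^{(n-1)}}=g|_{X^{(n-1)}}$, preserves the assumption $f^*(u)=g^*(u)$ (since $\widetilde H_1\simeq g$), and reduces the conclusion to showing $f|_{X^{(n)}}\simeq \widetilde H_1|_{X^{(n)}}$. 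Under this reduction, the $n$-simplicity of $Y$ ensures that the primary difference cochain $d^n(f,g)\in C^n(X;\pi_n(Y))$ is defined and is a cocycle (Spanier, Chapter~8), and the standard obstruction theorem states that $[d^n(f,g)]=0$ if and only if $f|_{X^{(n)}}\simeq g|_{X^{(n)}}$ rel $X^{(n-1)}$. It therefore suffices to prove $[d^n(f,g)]=0$.

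To achieve this, compose everything with $\varphi$. By naturality of the difference cochain under post-composition together with the definition of the coefficient homomorphism,
\[
\varphi_u^n\bigl([d^n(f,g)]\bigr)=[d^n(\varphi f,\varphi g)]\in H^n(X;\pi_n(\mathcal{K})).
\]
For the Eilenberg--MacLane target $\mathcal{K}=K(G,n)$, the classical Eilenberg formula for difference cochains into an Eilenberg--MacLane space identifies the right-hand side with the difference of pullbacks of the fundamental class:
\[
[d^n(\varphi f,\varphi g)]=(\varphi f)^*(i_n)-(\varphi g)^*(i_n)=f^*(u)-g^*(u)=0,
\]
by hypothesis. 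The monomorphism assumption on $\varphi_u^n$ then forces $[d^n(f,g)]=0$, completing the argument. The main obstacle is verifying the two classical facts used in this last step---naturality of $d^n$ and the Eilenberg--MacLane identification---in the restricted setting where $f$ and $g$ are defined only on $X^{(n+1)}$; this is essentially bookkeeping, since the relevant cochains involve only cells of dimension $\le n+1$.
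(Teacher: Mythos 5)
Your argument is correct and is essentially the paper's own proof: both rest on naturality of the difference/obstruction class under post-composition with $\varphi$, together with the identification of that class for the Eilenberg--MacLane target with $(\varphi\circ f)^*(i_n)-(\varphi\circ g)^*(i_n)=f^*(u)-g^*(u)=0$, after which injectivity of $\varphi_u^n$ kills the obstruction. The only cosmetic difference is that you first normalize $g$ via the homotopy extension property so that $f$ and $g$ agree on $X^{(n-1)}$, whereas the paper carries the given homotopy $H$ along in the obstruction $d(f_n,H,g_n)$.
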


\begin{proof}Let $H\colon X^{(n-1)}\times I\to Y$ be a homotopy between the restricted maps  $f_{n-1}=f|X^{(n-1)}$ and $g_{n-1}=g|X^{(n-1)}$, and consider $d(f_n,H,g_n)\in H^n(X;\pi_n(Y))$ 
the obstruction to the extension of $H$ to a homotopy between $f_n=f|X^{(n)}$ and $g_n=g|X^{(n)}$.

Since $\varphi_u^n(d(f_n,H,g_n))=d(\varphi\circ f_n,\varphi\circ H,\varphi\circ g_n)$ and $f^*(u)=g^*(u)$, then
$(\varphi\circ f)^*(i_n)=(\varphi\circ g)^*(i_n)$. Hence, $d(f_n,H,g_n)=0$ and so the result follows.
\end{proof}

\begin{theorem} Given a map $f\colon X^{(n+1)}\to Y$ and $\alpha\in H^n(X^{(n+1)};G)$, there exists a map
$g\colon X^{(n+1)}\to Y$ such that $g^*(u)=\alpha$ and $g|X^{(n-1)}=f|X^{(n-1)}$ if and only if $\alpha-f^*(u)$ belongs
to the image of $\varphi_u^n$.\label{teo.2.2}
\end{theorem}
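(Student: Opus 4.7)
The plan is to prove both directions using the standard machinery of difference cochains from obstruction theory, together with naturality of this construction under the coefficient map $\varphi_*$.

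For the forward direction, suppose $g\colon X^{(n+1)}\to Y$ exists with $g|X^{(n-1)}=f|X^{(n-1)}$ and $g^*(u)=\alpha$. Writing $f_n=f|X^{(n)}$ and $g_n=g|X^{(n)}$, I would consider the difference cochain $d(f_n,g_n)\in C^n(X;\pi_n(Y))$. Since both $f$ and $g$ extend to $X^{(n+1)}$, the obstruction cochains $c(f_n)$ and $c(g_n)$ vanish, and from the standard coboundary relation $c(g_n)-c(f_n)=\delta d(f_n,g_n)$ it follows that $d(f_n,g_n)$ is a cocycle, hence defines a class in $H^n(X;\pi_n(Y))$. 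Applying naturality of the difference construction under $\varphi$, one has $\varphi_*(d(f_n,g_n))=d(\varphi f_n,\varphi g_n)$, and via the bijection $[X^{(n+1)},\mathcal{K}]\cong H^n(X;G)$ this difference represents $(\varphi g)^*(i_n)-(\varphi f)^*(i_n)=g^*(u)-f^*(u)=\alpha-f^*(u)$. Thus $\varphi_u^n\bigl([d(f_n,g_n)]\bigr)=\alpha-f^*(u)$, showing the element lies in the image.

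For the converse, take $\beta\in H^n(X;\pi_n(Y))$ with $\varphi_u^n(\beta)=\alpha-f^*(u)$ and choose a cocycle representative $d\in Z^n(X;\pi_n(Y))$. The key move is to modify $f$ cell by cell on the $n$-skeleton: using the standard realization lemma in obstruction theory, there is a map $g_n\colon X^{(n)}\to Y$ agreeing with $f$ on $X^{(n-1)}$ and satisfying $d(f_n,g_n)=d$. To promote $g_n$ to $X^{(n+1)}$, I invoke the same coboundary relation $c(g_n)=c(f_n)+\delta d(f_n,g_n)$; the right-hand side vanishes because $c(f_n)=0$ (as $f$ itself already extends to $X^{(n+1)}$) and because $d$ is a cocycle. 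Hence $g_n$ extends to the desired $g\colon X^{(n+1)}\to Y$. Finally, the naturality argument from the first paragraph yields $g^*(u)-f^*(u)=\varphi_u^n\bigl([d]\bigr)=\varphi_u^n(\beta)=\alpha-f^*(u)$, so $g^*(u)=\alpha$.

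The forward direction is essentially bookkeeping once one invokes naturality, so the main obstacle is the backward direction, specifically the double requirement that the constructed $g_n$ both realize the prescribed difference class \emph{and} extend over the $(n+1)$-skeleton. Separately each is routine, but the compatibility is precisely what forces the choice of a \emph{cocycle} representative of $\beta$: this is what makes $\delta d=0$ and therefore allows the obstruction $c(g_n)$ to remain zero. I would present these two facts together through the identity $c(g_n)-c(f_n)=\delta d(f_n,g_n)$, which is the common engine behind both halves of the theorem and mirrors the argument used in Theorem~\ref{teo.2.1}.
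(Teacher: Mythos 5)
Your proof is correct and follows essentially the same route as the paper: both directions rest on the difference cochain $d(f_n,g_n)$, its naturality under $\varphi$, and the identification of differences of maps into $\mathcal{K}$ with differences of pulled-back classes (the paper packages this via the reference constant map $c$, writing $\varphi_u^n(d(f_n,g_n))=d(c,\varphi\circ g_n)-d(c,\varphi\circ f_n)$). You are in fact more explicit than the paper about why the realized $g_n$ extends over $X^{(n+1)}$ --- via $c(g_n)=c(f_n)+\delta d(f_n,g_n)=0$ for a cocycle representative --- a step the paper leaves implicit.
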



\begin{proof}
Let $c\colon X^{(n+1)}\to Y$ be a constant map. Suppose that there exists $g\colon X^{(n+1)}\to Y$ as stated in Theorem~\ref{teo.2.2}. Then, one has $\varphi_u^n(d(f_n,g_n))=d(c,\varphi\circ g_n)-d(c,\varphi\circ f_n)=\alpha- f^*(u)$.

Conversely if $\alpha-f^*(u)$ belongs to the image of $\varphi_u^n$, let $\beta\in H^n(X;\pi_n(Y))$ be such that
$\varphi_u^n(\beta)=\alpha -f^*(u)$.

It follows then the existence of $g\colon X^{(n+1)}\to Y$ such that $d(f_n,g_n)=\beta$ and
$g^*(u)=\varphi_u^n(d(f_n,g_n))+d(c,\varphi\circ f_n)=\alpha$.
\end{proof}

\begin{theorem}Consider $f\colon X^{(n)}\to Y$ and $\alpha\in H^n(X^{(n+1)};G)$. If $\varphi_u^n$ is an
epimorphism and $\varphi_u^{n+1}$ is a monomorphism, then there exists $g\colon X^{(n+1)}\to Y$ such that $g|X^{(n-1)}=f|X^{(n-1)}$
and $g^*(u)=\alpha$.\label{teo.2.3}
\end{theorem}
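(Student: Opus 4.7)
The plan is to reduce Theorem~\ref{teo.2.3} to Theorem~\ref{teo.2.2} by first producing an extension $\tilde{f}\colon X^{(n+1)}\to Y$ of $f|X^{(n-1)}$ (which need not coincide with $f$ on the whole $n$-skeleton), and then modifying this extension to hit the prescribed class $\alpha$. Granting such a $\tilde f$, the class $\alpha-\tilde f^*(u)\in H^n(X^{(n+1)};G)=H^n(X;G)$ lies in the image of $\varphi_u^n$ by the surjectivity hypothesis, so Theorem~\ref{teo.2.2} produces $g\colon X^{(n+1)}\to Y$ with $g|X^{(n-1)}=\tilde f|X^{(n-1)}=f|X^{(n-1)}$ and $g^*(u)=\alpha$, as required.

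Constructing $\tilde f$ is the substantive step. I would appeal to standard obstruction theory: associated with $f\colon X^{(n)}\to Y$ there is an obstruction cocycle $o(f)\in Z^{n+1}(X;\pi_n(Y))$ whose cohomology class $\zeta=[o(f)]\in H^{n+1}(X;\pi_n(Y))$ vanishes if and only if $f$ can be modified on the $n$-cells, rel $X^{(n-1)}$, to extend to $X^{(n+1)}$. By naturality, composition with $\varphi\colon Y\to\mathcal{K}$ carries $\zeta$ to the obstruction class of $\varphi\circ f\colon X^{(n)}\to\mathcal{K}$, namely $\varphi_u^{n+1}(\zeta)=[o(\varphi\circ f)]\in H^{n+1}(X;G)$.

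The crux is that this last class vanishes automatically because $\mathcal{K}=K(G,n)$: the obstruction cocycle for extending any $h\colon X^{(n)}\to K(G,n)$ to $X^{(n+1)}$ is, at the cellular cochain level, the coboundary $\delta(h^{\#}(i_n))$ of the characteristic cochain, and hence represents the zero class in $H^{n+1}(X;G)$. Applying this to $h=\varphi\circ f$ gives $\varphi_u^{n+1}(\zeta)=0$, and the injectivity of $\varphi_u^{n+1}$ then forces $\zeta=0$, producing the desired $\tilde f$.

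The main obstacle in filling out the details is the clean verification of the identity ``obstruction $=\delta(h^{\#}(i_n))$'' for Eilenberg--MacLane targets. It amounts to unwinding cellular obstruction theory under the identification $\pi_n(\mathcal{K})\cong G$ coming from $i_n$, but it is the one place where the specific structure of $\mathcal{K}$ enters; the remainder of the argument is essentially a packaging of Theorem~\ref{teo.2.2} together with the surjectivity hypothesis on $\varphi_u^n$.
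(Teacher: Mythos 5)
Your proposal is correct and follows essentially the same route as the paper: vanishing of the obstruction $o(\varphi\circ f)$ for the Eilenberg--MacLane target, injectivity of $\varphi_u^{n+1}$ to kill $o(f)$ and obtain an extension $\tilde f$ of $f|X^{(n-1)}$ to $X^{(n+1)}$, then Theorem~\ref{teo.2.2} combined with surjectivity of $\varphi_u^n$ to adjust to the prescribed class $\alpha$. Your explicit justification that $o(\varphi\circ f)=0$ via the coboundary $\delta(h^{\#}(i_n))$ is a detail the paper asserts without comment.
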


\begin{proof}
Let $o(f)\in H^{n+1}(X;\pi_n(Y))$ be the obstruction to the extension of $f_{n-1}$ to $(n+1)$-skeleton. 
Since $\varphi_u^{n+1}(o(f))=o(\varphi\circ f)=0$ and $\varphi_u^{n+1}$ is a monomorphism we have that $o(f)=0$. 
Then, there exists an extension of $f_{n-1}$ to $X^{(n+1)}$ and so, applying Theorem~\ref{teo.2.2} to it, the proof is finished.
\end{proof}

\begin{proposition}\label{prop.2.4} Assuming that $Y$ is $n$-simple for all $n\leq \dim X$, $H^i(X;\pi_i(Y))=0$ 
for $i\neq n$, $H^{i+1}(X;\pi_i(Y))=0$ for $i>n$, and $\varphi_u^n$ is a monomorphism, then the map 
$E\colon [X,Y]\to H^n(X;G)$ defined by $E([f])=f^*(u)$ is injective and $\operatorname{im} E=\operatorname{im} \varphi_{u}^{n}\approx H^n(X;\pi_n(Y))$.
\end{proposition}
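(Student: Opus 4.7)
The statement splits into two independent claims: injectivity of $E$ and identification of $\operatorname{im} E$ with $\operatorname{im}\varphi_u^n$. Both follow from standard obstruction theory combined with Theorems~\ref{teo.2.1} and \ref{teo.2.2}. For injectivity, suppose $f,g\colon X\to Y$ satisfy $f^*(u)=g^*(u)$. I would build a homotopy between them by induction on the skeleta of $X$: since $Y$ is $n$-simple in every relevant degree, the obstruction to extending a partial homotopy from $X^{(k-1)}\times I$ across the $k$-cells lies in $H^k(X;\pi_k(Y))$, which vanishes for $k\neq n$ by hypothesis. At the single delicate degree $k=n$, Theorem~\ref{teo.2.1} applies (using $\varphi_u^n$ monic and $f^*(u)=g^*(u)$) and produces a homotopy over $X^{(n)}$; the remaining obstructions for $k>n$ again vanish, yielding $f\simeq g$.

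For $\operatorname{im} E\subseteq\operatorname{im}\varphi_u^n$, I would first use the vanishing of $H^k(X;\pi_k(Y))$ for $k<n$, together with the homotopy extension property, to replace an arbitrary representative $f$ by a homotopic one whose restriction to $X^{(n-1)}$ is a chosen constant $c$. Theorem~\ref{teo.2.2} applied with $c$ in place of $f$ and $\alpha=f^*(u)$ then gives $f^*(u)-c^*(u)=f^*(u)\in\operatorname{im}\varphi_u^n$. Passing between $H^n(X;G)$ and $H^n(X^{(n+1)};G)$ is legitimate because the restriction is an isomorphism, since relative cochains of $(X,X^{(n+1)})$ are concentrated in degrees above $n+1$.

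For the reverse inclusion, given $\alpha\in\operatorname{im}\varphi_u^n$, Theorem~\ref{teo.2.2} (with $f$ a constant map) produces $g\colon X^{(n+1)}\to Y$ with $g^*(u)=\alpha$. I would then extend $g$ cell by cell to all of $X$: the obstruction to extending from $X^{(k)}$ to $X^{(k+1)}$ lies in $H^{k+1}(X;\pi_k(Y))$, and these groups vanish for $k\geq n+1$ by the second hypothesis $H^{i+1}(X;\pi_i(Y))=0$ for $i>n$. The resulting $\widetilde g\colon X\to Y$ satisfies $E([\widetilde g])=\alpha$. The final isomorphism $\operatorname{im}\varphi_u^n\approx H^n(X;\pi_n(Y))$ is merely a restatement of the injectivity of $\varphi_u^n$.

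The main obstacle is the bookkeeping for the forward inclusion: Theorem~\ref{teo.2.2} requires two maps that actually agree on $X^{(n-1)}$, so one must first pick a clean representative of $[f]$ before invoking it, and then transfer the cohomology class $f^*(u)$ between $X$ and $X^{(n+1)}$. Once this housekeeping is settled, the inductive obstruction arguments in the other steps are routine applications of the given vanishing hypotheses.
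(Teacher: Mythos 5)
Your proof is correct and follows essentially the same route as the paper's: injectivity via the vanishing of $H^i(X;\pi_i(Y))$ for $i\neq n$ together with Theorem~\ref{teo.2.1} at degree $n$, and both inclusions for the image via Theorem~\ref{teo.2.2} applied to a representative made constant on $X^{(n-1)}$, followed by extension over the higher skeleta using $H^{i+1}(X;\pi_i(Y))=0$ for $i>n$. Your extra bookkeeping (the isomorphism $H^n(X;G)\approx H^n(X^{(n+1)};G)$ and the observation that only $\alpha\in\operatorname{im}\varphi_u^n$ can be realized) is sound and in fact slightly more careful than the paper's own exposition.
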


\begin{proof}
Since $H^i(X;\pi_i(Y))=0$, for $i<n$, any two maps $f,g\colon X\to Y$ are homotopic over the $(n-1)$-skeleton. 
If $f^*(u)=g^*(u)$ and $\varphi_u^n$ is a monomorphism it follows from Theorem~\ref{teo.2.1} that $f$ and $g$ are
homotopic over the $n$-skeleton. Since $H^i(X;\pi_i(Y))=0$ for $i>n$, then $f$ and $g$ are homotopic and so $E$ is injective.

Given $f\colon X\to Y$ then  $f$ is homotopic to a constant map over the $(n-1)$-skeleton. It follows from
Theorem~\ref{teo.2.2} that $E([f])=f^*(u)$ belongs to the image of~$\varphi_u^n$.

If $\alpha\in H^n(X;G)=H^n(X^{(n+1)};G)$, by Theorem~\ref{teo.2.2} there exists $h\colon X^{(n+1)}\to Y$ 
such that $h^*(u)=\alpha$ and $h$ is constant over the $(n-1)$-skeleton. Since $H^{i+1}(X;\pi_i(Y))=0$ for $i>n$,
there exists $f\colon X\to Y$ with $f^*(u)=\alpha$. Hence $\operatorname{im} E=\operatorname{im} \varphi_u^n$.
\end{proof}

\begin{theorem}\label{teo.2.5} Suppose that $Y$ is $n$-simple, for all $n\leq \dim X$ and $J=\{n \mathrel{:} H^n(X;\pi_n(Y))\neq 0 \}\not=\emptyset$. For each $n\geq 1$, let $G_n$ be an abelian group,  $u_n\in H^n(Y;G_n)$ and $\varphi_{u_{n}}^n\colon H^n(X; \pi_n(Y))\to H^n(X;G_n)$ as  in {\rm (\ref{eq1})}. If $\varphi_{u_n}^{n}$ is a monomorphism, then the function
\[E\colon [X,Y]\to \prod_{n\in J} H^n(X;G_n)\] given by $E([f])=\prod_{n\in J}f^*(u_n)$ is injective
\textup{(}where $\prod$ denotes the Cartesian product\textup{)}.
\end{theorem}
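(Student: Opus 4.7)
The plan is to mimic the skeleton-by-skeleton argument of Proposition \ref{prop.2.4}, but now handle each dimension $n \in J$ separately using the corresponding class $u_n$. Assume $E([f]) = E([g])$, i.e., $f^*(u_n) = g^*(u_n)$ for every $n \in J$. I will prove by induction on $k \leq \dim X$ that $f\vert X^{(k)}$ and $g\vert X^{(k)}$ are homotopic; taking $k = \dim X$ yields $f \simeq g$ and so injectivity of $E$.

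For the base step, if $Y$ is path-connected (the standing assumption behind these coefficient constructions) then the two restrictions to the discrete space $X^{(0)}$ are trivially homotopic. For the inductive step, suppose we already have a homotopy $H \colon X^{(n-1)} \times I \to Y$ from $f\vert X^{(n-1)}$ to $g\vert X^{(n-1)}$, and consider the obstruction $d(f_n, H, g_n) \in H^n(X; \pi_n(Y))$ to extending $H$ across the $n$-cells. Two cases arise:

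If $n \notin J$ then by definition $H^n(X; \pi_n(Y)) = 0$, so the obstruction is automatically zero and $H$ extends to $X^{(n)} \times I$. If $n \in J$, then the restrictions $f\vert X^{(n+1)}$ and $g\vert X^{(n+1)}$ are maps to $Y$ that are homotopic over $X^{(n-1)}$ (by the inductive hypothesis) and satisfy $(f\vert X^{(n+1)})^*(u_n) = (g\vert X^{(n+1)})^*(u_n)$ by naturality together with the hypothesis $E([f]) = E([g])$. Since $\varphi_{u_n}^n$ is a monomorphism, Theorem \ref{teo.2.1} applied with $u = u_n$ delivers a homotopy over $X^{(n)}$, closing the induction.

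The only subtle point is verifying that the hypotheses of Theorem \ref{teo.2.1} are met at each inductive stage: one must work with maps defined on $X^{(n+1)}$ rather than just $X^{(n)}$. This is not an obstacle since $f$ and $g$ are given on all of $X$, so restriction to any skeleton is automatic. Thus the induction propagates without obstruction, and the main content of the theorem reduces to invoking Theorem \ref{teo.2.1} once for each $n \in J$.
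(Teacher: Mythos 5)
Your proposal is correct and is essentially the paper's own argument, just written out in full: the paper compresses your induction into ``any two maps are homotopic over $X^{(p-1)}$ since the lower obstruction groups vanish, then apply Theorem~\ref{teo.2.1} successively,'' which is exactly your case split between $n\notin J$ (obstruction group zero) and $n\in J$ (invoke Theorem~\ref{teo.2.1} with $u=u_n$). No substantive difference.
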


\begin{proof}
Let $p$ be the least positive integer such that $H^p(X;\pi_p(Y))\neq 0$. Since $H^i(X;\pi_i(Y))=0$, $i<p$, 
any two maps $f,g\colon X\to Y$ are homotopic over $X^{(p-1)}$.

Suppose  $f^*(u_p)=g^*(u_p)$. Since $\varphi_{u_{p}}^p$ is a monomorphism, it follows
from Theorem~\ref{teo.2.1} that $f$ and $g$ are homotopic over $X^{(p)}$  and so applying 
Theorem~\ref{teo.2.1} successively, the result follows.
\end{proof}

Now, we present some interesting consequences of Theorem~\ref{teo.2.5}.

Let $\mathcal{K}=K(H_n(Y),n)$ be an Eilenberg--MacLane space. Then, the map $\varphi\colon Y\to \mathcal{K}$ such that $\varphi^*(i_n)=u_n$ 
induces the Hurewicz homomorphism
\[\varphi_{*}=\mathfrak{X}(Y)\colon \pi_n(Y)\to \pi_n(\mathcal{K})=H_n(Y).\]

Let us consider 
\[\mathfrak{X}_n^p(Y) =\varphi_{u_{n}}^p \colon H^p(X;\pi_n(Y))\to H^p(X;H_n(Y))\] the coefficient homomorphism as defined in (\ref{eq1}).

Let $u_n\in H^n(Y;H_n(Y))$ be an element such that $\langle u_n,x\rangle=x$, for all $x\in H_n(Y)$.

\begin{theorem}\label{teo.2.6}If $\mathfrak{X}_n^n(Y)$ is a monomorphism for all $n\in J$ then the function $E$ is injective.
\end{theorem}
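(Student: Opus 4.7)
The plan is to obtain Theorem~\ref{teo.2.6} as a direct specialization of Theorem~\ref{teo.2.5}, by taking, for each $n \geq 1$, the coefficient group to be $G_n = H_n(Y)$ and the class to be the $u_n \in H^n(Y; H_n(Y))$ singled out by $\langle u_n, x\rangle = x$ for all $x \in H_n(Y)$.

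First I would record the identification baked into the paragraph preceding the statement: this $u_n$ is precisely $\varphi^*(i_n)$, where $\varphi\colon Y \to K(H_n(Y), n)$ is a map inducing the Hurewicz homomorphism $\mathfrak{X}(Y)\colon \pi_n(Y) \to H_n(Y)$ on $\pi_n$. With this choice, the coefficient homomorphism $\varphi_{u_n}^n$ supplied by~(\ref{eq1}) coincides \emph{by definition} with $\mathfrak{X}_n^n(Y)$.

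Next I would check that the hypotheses of Theorem~\ref{teo.2.5} are met. For $n \in J$, the standing assumption that $\mathfrak{X}_n^n(Y)$ is a monomorphism translates immediately into the monomorphism condition on $\varphi_{u_n}^n$ demanded by that theorem. For $n \notin J$, the very definition of $J$ gives $H^n(X; \pi_n(Y)) = 0$, so $\varphi_{u_n}^n$ has trivial domain and is vacuously injective. Hence the monomorphism hypothesis of Theorem~\ref{teo.2.5} holds for every $n \geq 1$.

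Finally, applying Theorem~\ref{teo.2.5} with this data yields that the map $E\colon [X, Y] \to \prod_{n \in J} H^n(X; H_n(Y))$ sending $[f]$ to $\prod_{n \in J} f^*(u_n)$ is injective, which is exactly the claim. I do not anticipate any serious obstacle: Theorem~\ref{teo.2.6} is essentially a rewording of Theorem~\ref{teo.2.5} in which the particular class $u_n$ is chosen so as to encode the Hurewicz homomorphism. The only delicate point is spelling out the identification $\mathfrak{X}_n^n(Y) = \varphi_{u_n}^n$, and this is already implicit in the construction of $\varphi$ from the characterizing condition $\langle u_n, x\rangle = x$.
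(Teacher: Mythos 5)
Your proof is correct and follows essentially the same route as the paper: both arguments reduce Theorem~\ref{teo.2.6} to Theorem~\ref{teo.2.5} by observing that, for the class $u_n\in H^n(Y;H_n(Y))$ with $\langle u_n,x\rangle=x$, the coefficient homomorphism $\varphi_{u_n}^n$ is exactly $\mathfrak{X}_n^n(Y)$ (the paper makes this identification via an explicit commutative diagram of Hurewicz homomorphisms, you invoke it as the definition already recorded in the preceding paragraph). No gap.
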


\begin{proof}
Let $\mathcal{K}=K(H_n(Y),n)$ and observe that the  commutative diagram
\[
\xymatrix@C=1.5cm{
\pi_n(Y)\ar[r]^{\mathfrak{X}(Y)} \ar[d]_{\varphi_*} & H_n(Y)\ar[d]^{\varphi_*} \\
\pi_n(\mathcal{K}) \ar[r]_{\mathfrak{X}(\mathcal{K})} & H_n(\mathcal{K})
}
\] induces for all $n$ the commutative diagram
\[
\xymatrix@C=1.5cm{
H^n(X;\pi_n(Y))\ar[r]^{\mathfrak{X}_n^n(Y)} \ar[d]_{\varphi_{u_n}^n} & H^n(X;H_n(Y))\ar[d]^{\mathrm{id}^n} \\
H^n(X;\pi_n(\mathcal{K})) \ar[r]_{\mathfrak{X}_n^n(\mathcal{K})} & H^n(X;H_n(\mathcal{K}))\rlap{.}
}
\]
Then,  the result follows from Theorem~\ref{teo.2.5}.
\end{proof}

\begin{corollary}\label{cor.2.7}Suppose $\mathfrak{X}_n^n(Y)$ is a monomorphism  and $H_{n-1}(X)$ a free group, for all
$n\in J$. Then, two maps $f,g\colon X \to Y$ are homotopic if and only if for each $n\in J$ the induced homomorphisms
$f_*,g_*\colon H_n(X)\to H_n(Y)$ are equal.
\end{corollary}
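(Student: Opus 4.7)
The plan is to combine Theorem~\ref{teo.2.6} with the universal coefficient theorem, using the freeness hypothesis precisely to kill an Ext term.

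The forward implication is immediate: if $f$ and $g$ are homotopic, then $f_*=g_*$ on every homology group, so no work is needed there. For the converse, the strategy is to show that $f_*=g_*$ on $H_n(X)$ for each $n\in J$ forces $f^*(u_n)=g^*(u_n)$ in $H^n(X;H_n(Y))$, and then to invoke Theorem~\ref{teo.2.6}, which says that under the standing hypothesis that each $\mathfrak{X}_n^n(Y)$ is a monomorphism, the assignment $E([f])=\prod_{n\in J}f^*(u_n)$ is injective on $[X,Y]$.

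The identification between cohomology classes and homomorphisms goes through the universal coefficient theorem. Because $H_{n-1}(X)$ is free, $\operatorname{Ext}(H_{n-1}(X),H_n(Y))=0$, so evaluation against the Kronecker pairing yields a natural isomorphism
\[
H^n(X;H_n(Y))\;\cong\;\operatorname{Hom}(H_n(X),H_n(Y)).
\]
Under this isomorphism the class $f^*(u_n)$ corresponds to $f_*$. Indeed, by naturality of the Kronecker pairing and the defining property $\langle u_n,y\rangle=y$ for all $y\in H_n(Y)$, one has
\[
\langle f^*(u_n),x\rangle=\langle u_n,f_*(x)\rangle=f_*(x)
\qquad\text{for every }x\in H_n(X).
\]
Hence $f^*(u_n)=g^*(u_n)$ if and only if $f_*=g_*$ on $H_n(X)$, and the corollary follows from Theorem~\ref{teo.2.6}.

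The only real point to watch is compatibility of the $u_n$ chosen here (defined by the pairing condition $\langle u_n,x\rangle=x$) with the $u_n$ feeding into Theorem~\ref{teo.2.6}, but this is automatic: the element $u_n\in H^n(Y;H_n(Y))$ singled out by the pairing condition is exactly the one whose associated map $\varphi\colon Y\to K(H_n(Y),n)$ realises the Hurewicz homomorphism $\mathfrak{X}(Y)$, which is the $\varphi$ used to define $\mathfrak{X}_n^n(Y)$. Once that is noted, there is no serious obstacle; the argument is essentially a naturality diagram plus one application of universal coefficients.
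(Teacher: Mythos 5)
Your argument is correct and is clearly the intended one: the paper states Corollary~\ref{cor.2.7} without proof, and the standard deduction is exactly what you give --- the freeness of $H_{n-1}(X)$ kills the $\operatorname{Ext}$ term in the universal coefficient sequence, so $f^*(u_n)\mapsto f_*$ is an isomorphism-level identification and Theorem~\ref{teo.2.6} finishes the converse. Your closing remark on the compatibility of the $u_n$ defined by $\langle u_n,x\rangle=x$ with the $\varphi$ underlying $\mathfrak{X}_n^n(Y)$ is exactly the point the paper itself records just before Theorem~\ref{teo.2.6}, so nothing is missing.
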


If $Y$ is $(n-1)$-connected ($\pi_1(Y)$ abelian if $n=1$) and $u\in H^n(Y;\pi_n(Y))$ is the element corresponding to
the inverse of the Hurewicz homomorphism \[\mathfrak{X}(Y)\colon \pi_n(Y)\to H_n(Y)\] then
$\varphi_*\colon \pi_n(Y)\to \pi_n(\mathcal{K})$ is the identity map, where $\varphi\colon Y\to \mathcal{K}$ is such that $\varphi^*(i_n)=u$. It follows that $\varphi_u^p$ is an isomorphism for all $X$ and $p$. This is true if $u$ is any $n$-characteristic element.

Let $Y$ and $Z$ be topological spaces and $\varphi\colon Y\to Z$ a map.
The following commutative diagram
\[
\xymatrix@C=1.5cm@R=1.25cm{
\pi_n(Y)\ar[r]^{\varphi_*} \ar[d]_{\mathfrak{X}(Y)} & \pi_n(Z)\ar[d]^{\mathfrak{X}(Z)} \\
H_n(Y) \ar[r]_{\varphi_*} & H_n(Z)
}
\] induces for all topological space $X$, the commutative diagram
\[
\xymatrix@C=1.5cm@R=1.25cm{
H^p(X;\pi_n(Y))\ar[r]^{\varphi^p} \ar[d]_{\mathfrak{X}_n^p(Y)} & H^p(X;\pi_n(Z))\ar[d]^{\mathfrak{X}_n^p(Z)} \\
H^p(X;H_n(Y)) \ar[r]_{\varphi^p} & H^p(X;H_n(Z))\rlap{.}
}
\]

\begin{theorem}\label{teo.3.5} Let $X$ and $Y$ be CW-complexes and $Y$  $i$-simple, for all $i\leq \dim X$. If $\mathfrak{X}_i^i(Z)\circ \varphi^i\colon H^i(X; \pi_i(Y))\to H^i(X;H_i(Z))$ are monomorphisms, then $\varphi$ induces an injective map $\varphi_\#\colon [X,Y]\to [X,Z]$ defined by $\varphi_\#([f])=[\varphi\circ f]$.
\end{theorem}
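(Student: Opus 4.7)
The plan is to reduce Theorem~\ref{teo.3.5} to Theorem~\ref{teo.2.5} by choosing, for each relevant $n$, a cohomology class on $Y$ that is pulled back from $Z$ along $\varphi$. Concretely, for each $n \geq 1$ I would let $v_n \in H^n(Z; H_n(Z))$ denote the canonical class corresponding to the identity of $H_n(Z)$, and set $u_n = \varphi^*(v_n) \in H^n(Y; H_n(Z))$.

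The first technical step is to identify the coefficient homomorphism $\varphi_{u_n}^n$ of equation~(\ref{eq1}) with the composite $\mathfrak{X}_n^n(Z) \circ \varphi^n$. Indeed, a classifying map for $u_n$ can be chosen as $\chi \circ \varphi \colon Y \to K(H_n(Z), n)$, where $\chi \colon Z \to K(H_n(Z), n)$ classifies $v_n$; since $\chi_*$ is the Hurewicz homomorphism $\mathfrak{X}(Z)$, the induced map on $\pi_n$ is $\mathfrak{X}(Z) \circ \varphi_*$, and hence the induced coefficient homomorphism on $H^n(X;-)$ is precisely $\mathfrak{X}_n^n(Z) \circ \varphi^n$. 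By the hypothesis of the theorem, this composite is a monomorphism for every $n$ with $H^n(X;\pi_n(Y)) \neq 0$.

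Once this identification is in place, Theorem~\ref{teo.2.5} applied to the collection $\{u_n\}_{n \in J}$ yields that the map $E \colon [X, Y] \to \prod_{n \in J} H^n(X; H_n(Z))$ defined by $E([f]) = \prod_{n \in J} f^*(u_n)$ is injective (in the degenerate case $J = \emptyset$ all obstructions automatically vanish and $[X,Y]$ is a singleton, so the statement is trivial). To conclude I observe that if $\varphi \circ f \simeq \varphi \circ g$, then for each $n \in J$
\[
f^*(u_n) = (\varphi \circ f)^*(v_n) = (\varphi \circ g)^*(v_n) = g^*(u_n),
\]
so $E([f]) = E([g])$, and injectivity of $E$ forces $[f] = [g]$.

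The only step with any real content is the identification $\varphi_{u_n}^n = \mathfrak{X}_n^n(Z) \circ \varphi^n$, which amounts to choosing the classifying map for $u_n$ as a composite and tracking the effect on $\pi_n$; the remainder is a direct application of Theorem~\ref{teo.2.5}, so I do not anticipate any serious obstacle.
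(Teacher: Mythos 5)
Your proposal is correct and follows essentially the same route as the paper: both reduce to Theorem~\ref{teo.2.5} via the pulled-back classes $u_n=\varphi^*(v_n)$ and the identification of the resulting coefficient homomorphism with $\mathfrak{X}_n^n(Z)\circ\varphi^n$. You are somewhat more explicit than the paper about why that identification holds (and about the degenerate case $J=\emptyset$), but the argument is the same.
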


\begin{proof}Consider $\mathcal{K}=K(H_i(Z),i)$, $u_Z^i\in H^i(Z;H_i(Z))$ for all $i$ and $E'\colon [X,Z]\to \prod_i H^i(X;H_i(Z))$ 
defined by $E'([\varphi\circ f])=\prod_i(\varphi\circ f)^*(u_Z^i)$.

Let us define $E\colon [X,Y]\to \prod_i H^i(X;H_i(Z))$ by $E([f])=\prod_i f^*(u_Y^i)$ where $u_Y^i=\varphi^*(u_Z^i)$. 
So we have the commutative diagram
\[
\xymatrix@C=1.7cm{
[X,Y]\ar[r]^-{E} \ar[d]_{\varphi_\#}  & \prod_i H^i(X;H_i(Z)) \\
  [X,Z] \ar[ru]_{E'} &
}
\] and since $\mathfrak{X}_i^i(Z)\circ \varphi^i$ are monomorphisms for all $i$, it follows from Theorem~\ref{teo.2.5}
that $E$ is injective and so, $\varphi_\#$ is injective as well.
\end{proof}

\section{Applications}\label{sec.3}

Given an $O$-stable vector bundle $\xi\to B$ where $O$ is the stable orthogonal group, let $w_i(\xi)\in H^i(B;\Z_2)$ and $p_k(\xi)\in H^{4k}(B;\Z)$ be  the Stiefel--Whitney and Pontrjagin classes of $\xi$, respectively.

Let $\gamma\to BO$ be the $O$-universal bundle and let $w_i=w_i(\gamma)$ and $p_k=p_k(\gamma)$ be  the universal
Stiefel--Whitney and Pontrjagin classes, respectively.

Write $\pi_n=\pi_n(BO)$, $\mathcal{K}_n=K(\pi_n,n)$ and let  $i_n\in H^n(\mathcal{K}_n;\pi_n)$ be the element corresponding to the inverse of Hurewicz homomorphism.

Also, consider the maps $\varphi_n\colon BO\to \mathcal{K}_n$ for $n\geq 1$ satisfying \[\varphi_n^*(i_n)=\begin{cases}
w_n, & n=1,2,\\ p_k, & n=4k,\ k\geq 1,
\end{cases} \] and denote by $\varphi_{\#} \colon \pi_n(BO)\to \pi_n(\mathcal{K}_n)$ the homomorphism induced by $\varphi_n$, which induces the coefficient homomorphism \[\varphi^{n}_{n}\colon H^n(X;\pi_n(BO))\to H^n(X;\pi_n), \] for any topological space $X$.

\begin{lemma}\label{lema.3.1}Let $X$ be a topological space satisfying: \begin{enumerate}
\item\label{lema.3.1.a} if $x\in H^{4k}(X;\Z)$, $x\neq 0$, and $k\geq 1$,  then $(2k-1)!a_kx\neq 0$,
where \[a_k=\begin{cases}1, & \text{for $k$ even,}\\ 2, & \text{for $k$ odd.}\end{cases} \]
\item\label{lema.3.1.b} $H^{8j+1}(X;\Z_2)=H^{8j+2}(X;\Z_2)=0$, for each $j\geq 1$.
\end{enumerate}

Under these conditions, the  coefficient homomorphism  $\varphi^{n}_{n}$ is a monomorphism for $n=1,2$  or $n=4k$, with $k\geq 1$.
\end{lemma}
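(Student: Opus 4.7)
The plan is to analyze the coefficient homomorphism $\varphi^{n}_{n}$ separately in the two ranges $n=1,2$ and $n=4k$, $k\geq 1$. The common principle is that $\varphi^{n}_{n}$ is obtained by applying the functor $H^{n}(X;-)$ to the group homomorphism $\varphi_{n*}\colon \pi_{n}(BO)\to \pi_{n}(\mathcal{K}_{n})$; under the Hurewicz identification, $\varphi_{n*}$ is simply the evaluation of the characteristic class $\varphi_{n}^{*}(i_{n})$ on a generator of $\pi_{n}(BO)$. Once $\varphi_{n*}$ is identified as a concrete map of abelian groups, the desired injectivity of $\varphi^{n}_{n}$ reduces to an elementary statement about multiplication on $H^{n}(X;-)$.

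For $n=1,2$ one has $\pi_{n}(BO)=\Z_{2}=\pi_{n}(\mathcal{K}_{n})$, and $w_{n}$ generates the one-dimensional $\Z_{2}$-vector space $H^{n}(BO;\Z_{2})$. A standard application of Hurewicz (in degree~$1$ because $\pi_{1}$ is abelian, and in degree~$2$ because $\pi_{1}(BO)$ acts trivially on $\pi_{2}(BO)$) shows that $w_{n}$ pairs nontrivially with the image of a generator of $\pi_{n}(BO)$ in $H_{n}(BO;\Z_{2})$. Hence $\varphi_{n*}$ is an isomorphism $\Z_{2}\to\Z_{2}$, so $\varphi^{n}_{n}$ is the identity on $H^{n}(X;\Z_{2})$ and in particular a monomorphism.

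For $n=4k$ with $k\geq 1$ we have $\pi_{4k}(BO)\cong \Z\cong \pi_{4k}(\mathcal{K}_{4k})$. Under these identifications, $\varphi_{n*}\colon \Z\to \Z$ is multiplication by the integer $\langle p_{k},h(\xi_{k})\rangle$, where $\xi_{k}\colon \S^{4k}\to BO$ represents a generator of $\pi_{4k}(BO)$ and $h$ denotes the Hurewicz homomorphism. By the classical computation of Bott (cf.\ Milnor--Stasheff, \emph{Characteristic Classes}), this integer equals $\pm(2k-1)!\,a_{k}$, with $a_{k}=1$ for $k$ even and $a_{k}=2$ for $k$ odd. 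Applying the coefficient functor $H^{4k}(X;-)$ we conclude that $\varphi^{4k}_{4k}$ is multiplication by $\pm(2k-1)!\,a_{k}$ on $H^{4k}(X;\Z)$, and this map is injective precisely by hypothesis~\eqref{lema.3.1.a}.

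The only nontrivial external input is Bott's formula for $p_{k}$ on a generator of $\pi_{4k}(BO)$; once that is accepted, the rest is a direct translation of the definition of the coefficient homomorphism $\varphi^{p}_{u}$ given in Section~\ref{sec.2}, together with naturality of $H^{*}(X;-)$ in the coefficient group. I expect the main subtle point, to be stated carefully but not reproved, to be precisely the identification of $\varphi_{n*}$ with the Bott integer $(2k-1)!\,a_{k}$; the role of hypothesis~\eqref{lema.3.1.a} is then just to forbid $(2k-1)!\,a_{k}$-torsion in $H^{4k}(X;\Z)$.
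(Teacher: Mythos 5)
Your proposal is correct and follows essentially the same route as the paper: identify $\varphi_{n*}$ as an isomorphism $\Z_2\to\Z_2$ for $n=1,2$, and as multiplication by $\pm(2k-1)!\,a_k$ on $\pi_{4k}(BO)\cong\Z$ via the Bott computation, then deduce injectivity of $\varphi^{4k}_{4k}$ from hypothesis~(1). The only (cosmetic) difference is that the paper extracts the kernel of multiplication by $(2k-1)!\,a_k$ on $H^{4k}(X;\Z)$ from the Bockstein long exact sequence, whereas you invoke additivity of the coefficient functor directly; both yield the same conclusion, and like the paper you correctly do not need hypothesis~(2) for the lemma itself.
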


\begin{proof}
For $n=1,2$ the proof is straightforward since $\pi_n(BO)=\Z_2$.

For $n=4k$, with $k\geq 1$, the homomorphism $\varphi\colon \pi_n(BO)\to \Z$ is given by $\varphi(\alpha)=(2k-1)!a_k$, where $\alpha$ is the generator of $\pi_n(BO)$ (see \cite{adachi}).

Consider the long exact sequence \[\xymatrix@C=.6cm{\cdots\ar[r] & H^{n-1}(X;H) \ar[r]^\beta & H^n(X;\Z)\ar[r]^{\varphi^n} & H^n(X;\Z)\ar[r] & H^n(X;H) \ar[r] & \cdots} \] where $H=\Z/\operatorname{im} \varphi$ and $\beta$ is the Bockstein operator. We see that there are no nonzero elements of $H^n(X;\Z)$
belonging to the image of $\beta$, because if there is such an element we would have $(2k-1)!a_kx=0$.

Then, $\varphi^n$ are monomorphisms, for $n=4k$ with $k\geq 1$.
\end{proof}

\begin{theorem}[Adachi]\label{teo.3.2}Let $X$ be a CW complex satisfying conditions \ref{lema.3.1.a} 
and \ref{lema.3.1.b} of Lemma~\ref{lema.3.1} and let $\xi_1$ and $\xi_2$ be two $O$-stable vector bundles over $X$. Then, $\xi_1$ and $\xi_2$ are equivalent if and only if $w_1(\xi_1)=w_1(\xi_2)$, $w_2(\xi_1)=w_2(\xi_2)$ and
$p_k(\xi_1)=p_k(\xi_2)$, for every $k\geq 1$.
\end{theorem}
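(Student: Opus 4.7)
The plan is to apply Theorem~\ref{teo.2.5} with $Y=BO$. Stable vector bundles over $X$ are in bijection with $[X,BO]$ via pullback of the universal bundle $\gamma$, and under this bijection $w_i(\xi)=f^*(w_i)$ and $p_k(\xi)=f^*(p_k)$. The ``only if'' direction is therefore immediate, since homotopic classifying maps pull the universal characteristic classes back to equal cohomology classes. For the converse, it suffices to show that two classifying maps $f_1,f_2\colon X\to BO$ with the same $w_1,w_2$ and $p_k$ for all $k\geq 1$ are homotopic.

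First I would identify the index set
\[
J = \{n\geq 1 \mathrel{:} H^n(X;\pi_n(BO))\neq 0\}.
\]
By Bott periodicity, $\pi_n(BO)=\Z_2$ for $n\equiv 1,2\pmod 8$, $\pi_n(BO)=\Z$ for $n\equiv 0\pmod 4$ (with $n\geq 4$), and $\pi_n(BO)=0$ otherwise. Hypothesis~\ref{lema.3.1.b} kills $H^n(X;\Z_2)$ for $n=8j+1,8j+2$ with $j\geq 1$, so $J\subseteq\{1,2\}\cup\{4k\mathrel{:}k\geq 1\}$. Moreover, $BO$ is an $H$-space (via Whitney sum), hence $n$-simple for every $n$, as required by Theorem~\ref{teo.2.5}.

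Next I would verify the monomorphism hypothesis for $\varphi_n^n$ at each $n\in J$, which is precisely what Lemma~\ref{lema.3.1} provides. For $n=1,2$, the class $w_n$ detects $\pi_n(BO)=\Z_2$, so the induced map $\pi_n(BO)\to\pi_n(\mathcal{K}_n)$ is the identity $\Z_2\to\Z_2$ and $\varphi_n^n$ is an isomorphism. For $n=4k$ with $k\geq 1$, the monomorphism property follows from condition~\ref{lema.3.1.a} via the Bockstein argument carried out in the proof of Lemma~\ref{lema.3.1}. Taking $G_n=\pi_n(BO)$ and $u_n=w_n$ (for $n=1,2$) respectively $u_n=p_k$ (for $n=4k$), Theorem~\ref{teo.2.5} yields injectivity of
\[
E\colon [X,BO]\to \prod_{n\in J} H^n(X;\pi_n(BO)),\qquad E([f])=\prod_{n\in J} f^*(u_n).
\]
Since $E([f_i])$ records exactly the tuple $(w_1(\xi_i),w_2(\xi_i),p_1(\xi_i),p_2(\xi_i),\ldots)$, our hypothesis gives $E([f_1])=E([f_2])$, so $f_1\simeq f_2$ and hence $\xi_1$ is equivalent to $\xi_2$. (If $J=\emptyset$, then $E$ is injective vacuously and the same conclusion holds.)

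The main obstacle is essentially bookkeeping: matching the conditions of Lemma~\ref{lema.3.1} to the exact pattern of Bott periodicity, so that every potentially nontrivial obstruction in $H^n(X;\pi_n(BO))$ is either killed outright (for $n=8j+1,8j+2$ with $j\geq 1$, by hypothesis~\ref{lema.3.1.b}) or detected by a characteristic class through an injective coefficient homomorphism (for $n=1,2$ and $n=4k$, by Lemma~\ref{lema.3.1}). Once this matching is in place, the injectivity of $E$ furnished by Theorem~\ref{teo.2.5} closes the argument immediately.
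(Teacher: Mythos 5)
Your proposal is correct and follows essentially the same route as the paper: classify $\xi_1,\xi_2$ by maps $f_1,f_2\colon X\to BO$, use hypothesis (2) and Bott periodicity to confine the nonzero obstruction groups to $n=1,2$ and $n=4k$, invoke Lemma~\ref{lema.3.1} for the monomorphism condition there, and conclude $f_1\simeq f_2$ from Theorem~\ref{teo.2.5}. Your write-up just makes explicit some details (the index set $J$, $n$-simplicity of $BO$) that the paper leaves implicit.
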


\begin{proof}
Let $f_1,f_2\colon X\to BO$ be the classifying maps for $\xi_1$ and $\xi_2$. Thus,
\begin{align*}
f^*_1(w_i)&=w_i(\xi_1)=w_i(\xi_2)=f_2^*(w_i),\ i=1,2,\\ f^*_1(p_k)&=p_k(\xi_1)=p_k(\xi_2)=f^*_2(p_k),\ k\geq 1,
\end{align*}
and since $H^n(X;\pi_n)=0$ for $n\neq 1,2$ and $n \neq 4k$, $ k \geq 1$ and $\varphi^n$ is a monomorphism in these dimensions,  it follows from Theorem~\ref{teo.2.5} that $f_1$ is homotopic to $f_2$.
\end{proof}

Let $f\colon \mathbb{S}^p\to N^n$ be an embedding with trivial normal bundle. Consider $N'$ the manifold obtained by surgery on
$N$ along the embedding $f$. In this case, we say that a surgery of type $(p+1,n-p)$ was done on $N'$.

From \cite{milnor}, one has the following:

\begin{proposition}\label{prop.3.3} If $N$ is a compact orientable manifold then it is possible to obtain $N'$ by surgeries of type $(p+1,n-p)$ on $N$, with $p\leq \frac{n-2}{2}$, such that the map $\varphi_i\colon \pi_i(N')\to \pi_i(BSO(n))$ induced by the classifying map for the tangent bundle of $N'$ is injective for $i\leq \frac{n-2}{2}$.
\end{proposition}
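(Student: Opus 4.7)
The plan is to build $N'$ by induction on $i$, performing at each stage surgeries on embedded $\S^i$'s so as to kill $\ker\varphi_i$, for $i=0,1,\dots,\lfloor(n-2)/2\rfloor$. For the base case ($i=0,1$) standard moves suffice: connected sums along a $0$-sphere make $N$ connected, and surgeries of type $(2,n-1)$ handle $\pi_1$ classes lying in the kernel, using that $\pi_0$ and $\pi_1$ of a compact manifold are finite or finitely generated.

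For the inductive step, assume $\varphi_j$ is injective for all $j<i$ with $i\le (n-2)/2$, and pick a nonzero $\alpha\in\ker\varphi_i$. Since $N$ is a compact manifold it has the homotopy type of a finite CW complex, so $\pi_i(N)$ is finitely generated and it suffices to kill one generator at a time. Because $2i\le n-2<n$, Whitney's embedding theorem represents $\alpha$ by a smooth embedding $f\colon \S^i\hookrightarrow N$. The condition $\alpha\in\ker\varphi_i$ means $\tau_N\circ f$ is null-homotopic, so $f^*\tau_N$ is trivial as an oriented bundle. From the splitting $f^*\tau_N\cong \tau_{\S^i}\oplus\nu_f$ and the stable triviality of $\tau_{\S^i}$ it follows that $\nu_f$ is stably trivial; its rank $n-i$ satisfies $n-i>i$, above the stable range on $\S^i$, so $\nu_f$ is actually trivial and orientable. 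Choosing a framing, we perform a surgery of type $(i+1,n-i)$ along $f$, producing a manifold in which $\alpha$ dies.

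The two points to check at each step are (a) no new element enters $\ker\varphi_j$ for $j<i$, and (b) the tangent classifying map of the new manifold matches that of the old one across the surgery. Both are controlled by the trace $W$ of the surgery: attaching an $(i+1)$-handle leaves $\pi_j$ unchanged for $j<i$, so $\pi_j(N)\cong\pi_j(W)\cong\pi_j(N')$; and the framed character of the surgery implies that $\tau_W$ restricts to the stable tangent bundles of $N$ and $N'$, so the induced homomorphisms into $\pi_j(BSO(n))$ are identified in the stable range $j\le (n-2)/2$. Injectivity at lower levels is therefore preserved. Iterating on $i$, after finitely many surgeries at each level, one arrives at the desired $N'$.

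The hard part is clearly (b): the classifying map of the tangent bundle is not an intrinsic homotopy invariant, and one must show that the framing of $\nu_f$ provides exactly the data needed to extend $\tau_N$ over the trace and identify it with $\tau_{N'}$ on the new boundary, so that the lower-dimensional injectivity already achieved is not destroyed by the new surgery. This is the content of the framed surgery lemma in \cite{milnor}, from which the proposition is quoted.
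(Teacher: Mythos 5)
The paper offers no proof of this proposition---it is stated as a quotation from Milnor's surgery paper \cite{milnor}---and your reconstruction is the standard kill-the-kernel-below-the-middle-dimension argument, which is correct and is exactly what that citation is pointing to. Two small imprecisions, neither fatal: $\pi_i$ of a compact manifold need not be finitely generated when $\pi_1$ is infinite (your induction is saved because $\pi_1(BSO(n))=0$ forces you to kill all of $\pi_1$ first, after which finite generation does hold), and the identification of $\tau_W|_{N}$ and $\tau_W|_{N'}$ with the stabilized tangent bundles holds for \emph{any} trace of a surgery, so your point (b) is less delicate than you suggest: once the classifying map of $\tau_W$ is in hand, injectivity at levels $j\leq i$ follows by comparing $\pi_j(N)$, $\pi_j(W)$, and $\pi_j(N')$ in the range $j\leq n-i-2$.
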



Let now $\varphi_{N\#}\colon[M,N]\to [M,BSO(n)]$ be the map defined by $\varphi_{N\#}([f])=[\varphi_N\circ f]$, 
where $\varphi_N\colon N\to BSO(n)$ is the classifying map for the tangent bundle of $N$.


Next, we prove the main result of the paper.



\begin{proof}[\proofname\ of Theorem~\ref{teo.3.6}]
Let $N'$ be the manifold obtained by surgery on $N$ (of type $(p+1,n-p)$ with $p\leq \frac{n-2}{2}$)
outside of the images of $f$ and $g$, such that $\varphi_i\colon\pi_i(N')\to \pi_i(BSO(n))$, the induced of the classifying map of $N'$, is injective for $i\leq \frac{n-2}{2}$ (see Proposition~\ref{prop.3.3}). Moreover, since $N'$ is $1$-parallelizable ($N'$ is  orientable), by \cite[Theorem~3]{kervaire-milnor} we get that $N'$ is $1$-connected, and consequently $N'$ is $i$-simple for all $i$.

Now, we show that the coefficient homomorphism \[\mathfrak{X}^i_i(BSO(n))\circ\varphi^i\colon H^i(M;\pi_i(N'))\to H^i(M;H_i(BSO(n))), \] induced by $\varphi_i$ and the Hurewicz homomorphism $\mathfrak{X}_i(BSO(n))$, is a monomorphism for $i\leq \dim M = m\leq  \frac{n-2}{2}$ (cf.\ \cite[Theorem~3.1]{kervaire-milnor}).

If $i=8k+1$ or $8k+2$, for $k\geq 1$, with $i\leq \frac{n-2}{2}$, then $\pi_i(BSO(n))=\Z_2$ and since $\varphi_i$ is injective it follows that $\pi_i(N')$ is either the trivial group or $\Z_2$. If $\pi_i(N')=\Z_2$, since $H^{8k+1}(M;\Z_2)=H^{8k+2}(M;\Z_2)=0$ for $k\geq 1$ by assumption, it follows that $\mathfrak{X}^i_i(BSO(n))\circ\varphi^i$ is a monomorphism.

For $n\geq 6$, observe that $\mathfrak{X}_2(BSO(n))$ and $\varphi_2$ are isomorphisms because  $\pi_1(BSO(n))=0$ and $\pi_2(BSO(n))=\Z_2$.

Consider $h\colon \mathbb{S}^{4k}\to BSO(n)$ a generator of $\pi_{4k}(BSO(n))$ and $p_k$ a Pontrjagin class of $h^*(\gamma_n)$. Then, we obtain that $h^*(p_k)=(-1)^{k+1}(2k-1)!a_ks$, where $s$ is a generator of $H^{4k}(\mathbb{S}^{4k})$ (see \cite{adachi}) and $a_k$ is defined in Lemma~\ref{lema.3.1}(\ref{lema.3.1.a}).

Further, $h_*([\mathbb{S}^{4k}])\neq 0$, where $[\mathbb{S}^{4k}]\in H_{4k}(\mathbb{S}^{4k})$ is the fundamental class of $\mathbb{S}^{4k}$. Thus, if $\beta$ is a non zero element of $\pi_{4k}(BSO(n))$ then $\mathfrak{X}_{4k}(\beta)$ is a multiple of $h_*([\mathbb{S}^{4k}])$, from which it follows that the Hurewicz homomorphism $$\mathfrak{X}_{4k}\colon \pi_{4k}(BSO(n))\to H_{4k}(BSO(n))$$ is a monomorphism, for $k\geq 1$.

Next, if $i=4k$ with $i\leq \frac{n-2}{2}$, we have $\pi_i(BSO(n))=\mathbb{Z}$, $H_{4k-1}(M)$ is torsion free and then $\varphi^{4k}$  and $\mathfrak{X}^{4k}$ are monomorphisms. So, we conclude that $\mathfrak{X}^{4k}\circ \varphi^{4k}$ are monomorphisms also, for $4k\leq \frac{n-2}{2}$. For other $k$, the groups $\pi_i(BSO(n))$ are trivial.

By Theorem~\ref{teo.3.5}, we conclude that $\varphi\colon N'\to BSO(n)$ induces the injective map $$\varphi_\#\colon [M,N']\to [M,BSO(n)].$$ 

In addition, since $\varphi_N\circ f\colon M\to BSO(n)$ classifies $f^*(\tau N)$,
the diagram \[ \xymatrix@C=1.5cm{ M \ar[r]^{f,g}\ar[rd]_{f',g'} & N \ar[r]^-{\varphi_N} & BSO(n)\\ 
 & N' \ar[ru]_{\varphi}
} \] guarantees that $\nu_f \simeq \nu_g  \Leftrightarrow \varphi_N\circ f \simeq \varphi_N \circ g \Leftrightarrow \varphi\circ f' \simeq \varphi \circ g' \Leftrightarrow f'\simeq g'$.
\end{proof}

\subsection*{Acknowledgment}  The second named author would like to thank the hospitality of the Department of Mathematics of the Federal University of S\~ao Carlos (UFSCar).

%
%
\end{document}